\def\N{{\mathbb{N}}}
\def\Z{{\mathbb{Z}}}
\def\R{{\mathbb{R}}}
\def\S{{\mathbb{S}}}
\def\Int{\operatorname{Int}}
\newtheorem{Theorem}{Theorem}[section]
\theoremstyle{definition}
\newtheorem*{Question}{Question}
\theoremstyle{remark}
\newtheorem*{Acknowledgement}{Acknowledgement}
\begin{document}
\sloppy
\title{On boundaries of Coxeter groups and topological fractal structures}
\author{Tetsuya Hosaka} 
\address{Department of Mathematics, Faculty of Education, 
Utsunomiya University, Utsunomiya, 321-8505, Japan}
\date{November 29, 2010}
\email{hosaka@cc.utsunomiya-u.ac.jp}
\keywords{Coxeter group; boundary; CAT(0) space; Davis complex; rank-one isometry; minimal; fractal}
\subjclass[2000]{20F65; 20F55; 57M07}
\thanks{
Partly supported by the Grant-in-Aid for Young Scientists (B), 
The Ministry of Education, Culture, Sports, Science and Technology, Japan.
(No.\ 21740037).}
\maketitle
\begin{abstract}
In this paper, 
based on research on rank-one isometries by W.~Ballmann and M.~Brin and 
recent research on rank-one isometries of Coxeter groups 
by P.~Caprace and K.~Fujiwara, 
we study a topological fractal structure of boundaries of Coxeter groups.
We also show that 
the limit-point set is dense in a boundary of a Coxeter group 
and introduce some observations 
on boundaries of CAT(0) groups with rank-one isometries.
\end{abstract}

\section{Introduction}

In this paper, we study boundaries of Coxeter groups, 
where we suppose that Coxeter groups are finitely generated and infinite.
A Coxeter group acts geometrically (i.e.\ properly and cocompactly by isometries) 
on a Davis complex which is a CAT(0) space \cite{M} and 
every Coxeter group is a CAT(0) group.
Details of Coxeter groups and Coxeter systems 
are found in \cite{Bo}, \cite{Br}, \cite{De}, \cite{Hu} and \cite{T}, and 
details of CAT(0) spaces, CAT(0) groups and their boundaries 
are found in \cite{BH}, \cite{CK} and \cite{GH}.

Now we suppose that an infinite group $G$ acts geometrically on a proper CAT(0) space $X$ 
and $G$ is non-elementary (hence $|\partial X|>2$).

A hyperbolic isometry $g$ of a proper CAT(0) space $X$ is 
said to be {\it rank-one}, 
if some (any) axis for $g$ does not bound a flat half-plane.
In \cite[Theorem~A]{BB}, W.~Ballmann and M.~Brin have proved that 
if there exists a rank-one isometry $g\in G$ of $X$ 
then 
for any two non-empty open subsets $U$ and $V$ of $\partial X$, 
there exists an element $g \in G$ 
such that $g (\partial X - U)\subset V$ 
and $g^{-1} (\partial X - V)\subset U$ 
where it is possible to choose $g$ to be rank-one 
(cf.\ \cite{CF}, \cite{Ha}).

This statement implies that 
if there exists a rank-one isometry $g\in G$ of $X$ 
then we can say that 
the boundary $\partial X$ has a {\it topological fractal structure}; 
that is, 
for any proper closed subset $F$ of $\partial X$ and 
any non-empty open subset $U$ of $\partial X$, 
there exists $g \in G$ such that $g F\subset U$.

We first note that 
if $G$ is hyperbolic then 
$G$ contains a rank-one isometry and 
the boundary $\partial X$ has a topological fractal structure.

In particular, 
if $G$ is hyperbolic and the boundary $\partial X$ is an $n$-sphere 
then the boundary $\partial X \approx \S^n$ 
has a topological fractal structure.
This case is the most simple case of boundaries of CAT(0) groups with rank-one isometries.
In general, the boundary $\partial X$ with a topological fractal structure 
is very complex.

In \cite{F}, H.~Fischer has investigated the boundary $\partial \Sigma$ 
of the Davis complex of a right-angled Coxeter group 
whose nerve is a connected closed orientable PL-manifold.
These boundaries are typical examples of 
boundaries with topological fractal structures.
If the boundary $\partial X$ with a topological fractal structure 
contains some proper closed subset $F$ 
which has a something non-trivial homotopy type, 
then any (small) open subset $U$ of $\partial X$ 
contains $g F$ for some homeomorphism $g \in G$ of $\partial X$ 
and $\{gF\,|\,g\in G\}$ is dense in $\partial X$, 
where every $g F$ is homeomorphic to $F$.

Also for a proper closed subset $F$ of 
the boundary $\partial X$ with a topological fractal structure 
such that the complement $\partial X - F$ is a very small neighborhood, 
any (small) open subset $U$ of $\partial X$ 
contains $gF$ for some homeomorphism $g \in G$ of $\partial X$.

Thus, in such a case that $G$ contains a rank-one isometry and 
$\partial X$ is not an $n$-sphere, 
then the boundary $\partial X$ 
seems to be just a topological fractal.

This fractal structure seems to be suggested in some research 
on boundaries of CAT(0) groups by M.~Bestvina (cf.\ \cite{Bes2}) and 
some research on cohomology of boundaries of Coxeter groups 
(cf.\ \cite{Bes}, \cite{Dav}, \cite{Dr}, \cite{Ho0}).

If the boundary $\partial X$ has a topological fractal structure, 
then (the action of $G$ on) $\partial X$ is {\it minimal}; 
that is, every orbit $G \alpha$ is dense in the boundary $\partial X$.
Indeed if we take $F=\{\alpha\}$ then for any open subset $U$ of $\partial X$, 
$g F \subset U$ for some $g \in G$.

Also then (the action of $G$ on) $\partial X$ is {\it scrambled}; 
that is, for any two points $\alpha,\beta\in \partial X$ with $\alpha\neq \beta$, 
\begin{align*}
&\limsup\{d_{\partial X}(g\alpha,g\beta)\,|\,g\in G\}>0 \ \text{and} \ \\
&\liminf\{d_{\partial X}(g\alpha,g\beta)\,|\,g\in G\}=0 
\end{align*}
(cf.\ \cite{Ho2}).
Indeed 
$\limsup\{d_{\partial X}(g\alpha,g\beta)\,|\,g\in G\}>0$ 
always holds (\cite[Theorem~3.1]{Ho2}) and 
if we take $F=\{\alpha,\beta\}$ then 
for any small open subset $U$ of $\partial X$, 
$gF \subset U$ for some $g \in G$, 
hence 
$\liminf\{d_{\partial X}(g\alpha,g\beta)\,|\,g\in G\}=0$.

Thus if the boundary $\partial X$ is a topological fractal, 
then $\partial X$ is minimal and scrambled.

We can find recent research 
on minimality and scrambled sets of boundaries of Coxeter groups 
in \cite{Ho1} and \cite{Ho2}.

From recent research on rank-one isometries of Coxeter groups 
by P.~Caprace and K.~Fujiwara \cite[Proposition~4.5]{CF}, 
we obtain that 
for a Coxeter system $(W,S)$ such that 
$S$ is finite and $W$ is infinite and non-elementary, 
if $(W,S)$ is irreducible and non-affine 
then the Coxeter group $W$ 
contains a rank-one isometry of the Davis complex $\Sigma$ defined by $(W,S)$.
Hence 
a finitely generated, infinite and non-elementary Coxeter group $W$ 
contains a rank-one isometry 
if and only if $W$ does not contain 
a finite-index subgroup 
which splits as a product $W_1\times W_2$ 
where $W_1$ and $W_2$ are infinite.

By the observation above, 
we obtain the following theorem.

\begin{Theorem}\label{Thm1}
Let $(W,S)$ be a Coxeter system 
such that $W$ is infinite and non-elementary and $S$ is finite.
For the Davis complex $\Sigma$ of $(W,S)$ and 
any proper CAT(0) space $X$ on which $W$ acts geometrically, 
the following statements are equivalent.
\begin{enumerate}
\item[(1)] $(W_{\tilde{S}},\tilde{S})$ is irreducible and non-affine.
\item[(2)] $W$ contains a rank-one isometry of $\Sigma$.
\item[(3)] $W$ contains a rank-one isometry of $X$.
\item[(4)] $\partial \Sigma$ has a topological fractal structure.
\item[(5)] $\partial \Sigma$ is minimal.
\item[(6)] $\partial \Sigma$ is scrambled.
\item[(7)] $\partial X$ has a topological fractal structure.
\item[(8)] $\partial X$ is minimal.
\item[(9)] $\partial X$ is scrambled.
\item[(10)] $\Sigma$ does not contain a quasi-dense subspace 
which splits as a product $\Sigma_1\times \Sigma_2$ of two unbounded subspaces.
\item[(11)] $X$ does not contain a quasi-dense subspace 
which splits as a product $X_1\times X_2$ of two unbounded subspaces.
\item[(12)] $W$ does not contain a finite-index subgroup 
which splits as a product $W_1\times W_2$ of two infinite subgroups.
\end{enumerate}
\end{Theorem}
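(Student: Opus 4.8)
The plan is to funnel all twelve conditions through the quasi-isometry-invariant core (1), (2), (3), (10), (11), (12), proving the dynamical conditions (4)--(9) equivalent to this core rather than to one another; this is essential because $\partial\Sigma$ and $\partial X$ need not be homeomorphic. I would first establish (1)$\Leftrightarrow$(12) by the structure theory of Coxeter systems. Decomposing $(W,S)$ into irreducible components and letting $W_{\tilde S}$ be the product of the \emph{infinite} ones, the failure of (1) means either that there are at least two infinite components, so that $W$ itself splits as a product of two infinite subgroups, or that the unique infinite component is affine. In the latter case non-elementarity excludes the infinite dihedral (rank-one affine) case, so $W_{\tilde S}$ is virtually $\Z^n$ with $n\ge 2$ and hence contains the finite-index splitting $\Z\times\Z^{n-1}$; either way (12) fails. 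The converse (1)$\Rightarrow$(12) is the ``only if'' already recorded from \cite[Proposition~4.5]{CF}, since a rank-one isometry obstructs every virtual product decomposition.

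Next I would close the core. Here (2)$\Leftrightarrow$(12) is the equivalence stated in the introduction. For (2)$\Leftrightarrow$(3) I would avoid reproving rank-one existence on $X$ directly and instead use that, in a proper CAT(0) space, a hyperbolic isometry is rank-one exactly when its axis is contracting, i.e.\ Morse, and that the Morse property of a periodic quasigeodesic is a quasi-isometry invariant. Since $W$ acts geometrically on both $\Sigma$ and $X$, these spaces are quasi-isometric, so the axis in $\Sigma$ of a rank-one $g\in W$ maps to a Morse quasigeodesic in $X$; its nearest-point image is a contracting axis, giving a rank-one isometry of $X$, and symmetrically. Finally (10)$\Leftrightarrow$(12) and (11)$\Leftrightarrow$(12) come from the splitting theorem for geometric actions on proper CAT(0) spaces: a finite-index decomposition $W_1\times W_2$ into infinite factors yields a quasi-dense product subspace with unbounded factors, and conversely such a product forces a finite-index product decomposition of $W$.

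With the core in hand, the forward passage to dynamics is immediate from the introduction: (2)$\Rightarrow$(4) and (3)$\Rightarrow$(7) by Ballmann--Brin's \cite[Theorem~A]{BB}, and then (4)$\Rightarrow$(5),(6) and (7)$\Rightarrow$(8),(9) because a topological fractal structure was already shown to be minimal and, via \cite[Theorem~3.1]{Ho2}, scrambled.

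What remains, and what I expect to be the main obstacle, are the reverse implications (5)$\Rightarrow$(1), (6)$\Rightarrow$(1), (8)$\Rightarrow$(1) and (9)$\Rightarrow$(1); equivalently, the failure of (1) must force each of $\partial\Sigma$ and $\partial X$ to be neither minimal nor scrambled. Using (1)$\Leftrightarrow$(12) and the splitting theorem, I would separate the failure of (1) into two geometric cases treated intrinsically on the relevant space. In the product case the boundary is a nontrivial spherical join $\partial X_1 * \partial X_2$ whose factor-boundaries are $W$-invariant proper closed subsets lying at Tits distance $\pi/2$: a point in one factor has a non-dense orbit, killing minimality, while a pair of points in different factors keeps $d_{\partial X}(g\alpha,g\beta)$ bounded away from $0$, so the $\liminf$ is positive and scrambledness fails. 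In the affine case the Flat Torus Theorem together with cocompactness forces $X\approx\R^n$, whence $\partial X\approx\S^{n-1}$ and the $W$-action factors through the finite point group; every orbit is then finite, so non-dense, and distinct points have positive $\liminf$. The delicate points are checking that the \emph{visual} metric $d_{\partial X}$, not merely the Tits metric, separates the join factors, and carrying out these arguments on $X$ itself rather than importing them from $\Sigma$, precisely because the two boundaries are in general non-homeomorphic.
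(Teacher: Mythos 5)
Your overall architecture is sound in its forward half and matches the paper: (1)$\Leftrightarrow$(2)$\Leftrightarrow$(12) from \cite[Proposition~4.5]{CF}, (2)$\Rightarrow$(4)$\Rightarrow$(5),(6) and (3)$\Rightarrow$(7)$\Rightarrow$(8),(9) from \cite[Theorem~A]{BB} together with the fractal-implies-minimal/scrambled observations. Your reverse dynamical implications (5),(6),(8),(9)$\Rightarrow$(1), proved by contrapositive from the splitting theorem, are a legitimate self-contained alternative to the paper's citations of \cite[Theorem~6.4]{Ho1} and \cite[Theorem~5.5]{Ho2}, and your Morse-invariance proof of (2)$\Leftrightarrow$(3) is a valid modern substitute for the paper's appeal to \cite[Theorem~B]{BB}. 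Three repairs are needed there, all minor: the factor boundaries $\partial X_1,\partial X_2$ are invariant only under the finite-index subgroup $W'=W_1\times W_2$, not under $W$, so the orbit of $\alpha\in\partial X_1$ lies in a finite union $\bigcup_i g_i\partial X_1$ over coset representatives (a finite union of closed nowhere dense sets, hence not dense), and the liminf is bounded below by the minimum over $i$ of the distances between the disjoint compact sets $g_i\partial X_1$ and $g_i\partial X_2$; this also settles your flagged ``visual versus Tits'' worry --- it is disjointness plus compactness of the factor boundaries, not the Tits metric, that separates them. Also, in the affine case $X$ need not be homeomorphic to $\R^n$; it only contains a quasi-dense $W'$-invariant flat, which suffices for $\partial X\approx \S^{n-1}$ (and in fact that whole case is subsumed by the product case, since a group virtually $\Z^n$ with $n\geq 2$ already violates (12)).

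The genuine gap is in your ``core'': you claim both directions of (10)$\Leftrightarrow$(12) and (11)$\Leftrightarrow$(12) ``come from the splitting theorem.'' Splitting theorems (\cite{Ho3}, \cite{Mo}) go only one way, from algebra to geometry: a finite-index decomposition $W_1\times W_2$ into infinite factors produces an invariant quasi-dense product subspace, i.e.\ not-(12)$\Rightarrow$not-(10),(11), which is exactly the direction the paper uses for (10)$\Rightarrow$(12) and (11)$\Rightarrow$(12). The converse you assert --- that a quasi-dense subspace splitting as a product of two unbounded subspaces, with no invariance assumed, forces a finite-index product decomposition of $W$ --- is not a splitting theorem, and it is in fact the deepest implication in the whole equivalence. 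Your own dynamical argument cannot fill it, because a subspace furnished by the negation of (10) carries no group action: there are no invariant (or virtually invariant) factor boundaries on which to run the orbit and separation arguments. The paper never proves this direction directly; it closes it around the cycle (12)$\Rightarrow$(2)$\Rightarrow$(4)$\Rightarrow$(6)$\Rightarrow$(10), where the load-bearing step (6)$\Rightarrow$(10) is \cite[Theorem~5.5]{Ho2}, whose content is precisely that a quasi-dense product subspace --- invariant or not --- obstructs scrambledness. To repair your proof you must either reprove that statement (handling non-invariant product subspaces, e.g.\ by showing such a subspace rules out rank-one/contracting axes) or restructure your graph of implications so that (10) and (11) are reached from (6) and (9), as in the paper, rather than directly from (12).
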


Here $W_{\tilde{S}}$ is the minimum finite-index parabolic subgroup of $(W,S)$ 
(\cite{De}, cf.\ \cite{Ho1}, \cite{Ho2}).

Thus if $(W,S)$ is an irreducible Coxeter system, then 
$W$ is finite, $W$ is affine or $W$ contains a rank-one isometry.

Hence for any Coxeter system $(W,S)$ and 
the irreducible decomposition of $(W,S)$ as 
$$ W=W_{S_1}\times \dots \times W_{S_k}\times W_{S_{k+1}} \times \dots \times W_{S_n}, $$
each $W_{S_i}$ is finite, affine or contains a rank-one isometry.

It is known that the following problem is open.

\begin{Question}
Suppose that a group $G$ acts geometrically on a proper CAT(0) space $X$.
Then is it the case that 
the limit-point set $\{g^\infty\,|\, g\in G ,\ o(g)=\infty\}$ 
is dense in the boundary $\partial X$?
\end{Question}

Here $g^\infty$ is the limit-point of the boundary $\partial X$ 
to which the sequence $\{g^ix_0\,|\,i\in \N\}\subset X$ converges in $X\cup \partial X$, 
where $x_0$ is a point of $X$ and the limit-point $g^\infty$ is not depend on $x_0$.
We note that any element $g$ of a CAT(0) group $G$ with the order $o(g)=\infty$ 
is a hyperbolic isometry.

We obtain a positive answer to this question for Coxeter groups.

\begin{Theorem}\label{Thm2}
Suppose that a finitely generated infinite Coxeter group $W$ 
acts geometrically on a proper CAT(0) space $X$.
Then the limit-point set $\{w^\infty\,|\, w\in W ,\ o(w)=\infty\}$ 
is dense in the boundary $\partial X$.
\end{Theorem}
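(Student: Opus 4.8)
The plan is to reduce first to the minimum finite-index parabolic subgroup $W_{\tilde S}$ and then to its irreducible factors, exploiting that the boundary of a product is a spherical join. Since $W_{\tilde S}$ has finite index in $W$ it again acts geometrically on $X$ with the same boundary, and $\{w^\infty \mid w\in W_{\tilde S},\ o(w)=\infty\}\subseteq\{w^\infty \mid w\in W,\ o(w)=\infty\}$, so it suffices to prove density of the limit-point set of $W_{\tilde S}$. Writing the irreducible decomposition $W_{\tilde S}=W_{S_1}\times\cdots\times W_{S_k}$, minimality of $W_{\tilde S}$ among finite-index parabolic subgroups forces each factor to be infinite (otherwise deleting a finite factor would yield a smaller finite-index parabolic subgroup), so by the dichotomy recorded after Theorem~\ref{Thm1} each $(W_{S_i},S_i)$ is either affine or contains a rank-one isometry. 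Using the product decomposition underlying conditions~(10)--(11) of Theorem~\ref{Thm1}, I would fix a quasi-dense subspace of $X$ splitting as $X_1\times\cdots\times X_k$ with $W_{S_i}$ acting geometrically on $X_i$, so that $\partial X=\partial X_1 * \cdots * \partial X_k$ is the spherical join of the factor boundaries.

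The next step is a factorwise density lemma: for each $i$ the set $\{g^\infty \mid g\in W_{S_i},\ o(g)=\infty\}$ is dense in $\partial X_i$. If $W_{S_i}$ contains a rank-one isometry $g$, then $g$ has infinite order and each conjugate $hgh^{-1}$ with $h\in W_{S_i}$ is again rank-one with limit point $h\,g^\infty$; as $(W_{S_i},S_i)$ is irreducible and non-affine, Theorem~\ref{Thm1} gives that $\partial X_i$ is minimal, so the dense orbit $W_{S_i}\,g^\infty$ lies in the limit-point set. If instead $W_{S_i}$ is affine, it contains a finite-index free abelian subgroup $A\cong\Z^{n_i}$, and the Flat Torus Theorem provides a convex flat $E\cong\mathbb{E}^{n_i}\subseteq X_i$ on which $A$ acts cocompactly by translations; then $E$ is quasi-dense in $X_i$, convexity identifies $\partial X_i$ with $\partial E\approx\S^{n_i-1}$, each nontrivial $a\in A$ is a translation whose limit point is its direction in $\S^{n_i-1}$, and the directions of lattice vectors are dense in the sphere.

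Finally I would assemble the factor data through the join. A point $\xi\in\partial X$ is given by a nonempty support $J\subseteq\{1,\dots,k\}$, directions $\eta_i\in\partial X_i$ for $i\in J$, and weights $\lambda_i>0$ with $\sum_{i\in J}\lambda_i^2=1$. For $i\in J$ I would pick, by the factorwise lemma, an infinite-order $w_i\in W_{S_i}$ with $w_i^\infty$ near $\eta_i$ and translation length $\ell_i>0$, and set $w_i=e$ for $i\notin J$. The element $(w_1^{p_1},\dots,w_k^{p_k})$ is then a hyperbolic isometry of $X$ of infinite order whose limit point lies in the join with directions $w_i^\infty$ and weights proportional to $p_i\ell_i$; since passing to powers preserves the limit point in each factor, and since positive integers $p_i$ can be chosen so that the numbers $p_i\ell_i$ approximate $\lambda_i$ (rational approximation of the ratios $\lambda_i/\ell_i$), this limit point approximates $\xi$ in the cone topology. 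Hence the limit-point set of $W_{\tilde S}$, and a fortiori of $W$, is dense in $\partial X$.

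The hard part will be making the splitting precise: producing a product decomposition of a quasi-dense subspace of $X$ that is genuinely compatible with the algebraic splitting of $W_{\tilde S}$, so that the factor actions and the join description $\partial X=\partial X_1*\cdots*\partial X_k$ are legitimate, and then checking in the last step that closeness of join coordinates---the directions in each factor together with the normalized weights---does force closeness in the cone topology of $\partial X$. I also expect the affine identification $\partial X_i\approx\S^{n_i-1}$ to require the standard but nonformal argument that a geodesic ray remaining within bounded distance of a convex flat is asymptotic to a ray inside it.
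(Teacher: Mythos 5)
Your proposal is correct and follows essentially the same route as the paper's own proof: reduction to the minimum finite-index parabolic subgroup, a splitting theorem producing a quasi-dense convex product subspace $X_1\times\cdots\times X_k$ with factorwise geometric actions, the affine versus rank-one dichotomy for the irreducible infinite factors, factorwise density of limit points, and assembly through the spherical join. The only difference is presentational: where the paper cites \cite{Ho3} and \cite{Mo} for the splitting and \cite{Ho1} for factorwise density under minimality and for the join assembly, you supply the direct arguments (conjugating a rank-one isometry and invoking minimality, the Flat Torus Theorem with dense lattice directions, and the powers-and-weights approximation of join coordinates).
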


Finally, we introduce some observations on 
boundaries of CAT(0) groups with rank-one isometries in Section~4, 
which relates to local properties of boundaries of CAT(0) groups.

\section{Rank-one isometries of Coxeter groups and 
topological fractal structures of their boundaries}

We prove Theorem~\ref{Thm1}.

\begin{proof}[Proof of Theorem~\ref{Thm1}]
We first obtain the equivalence $(1)\Leftrightarrow (2)\Leftrightarrow (12)$ 
from \cite[Proposition~4.5]{CF} and the observation in Section~1.
Also $(2)\Leftrightarrow (3)$ holds by \cite[Theorem~B]{BB}.

From the observation in Section~1 
on rank-one isometries and topological fractal structures of boundaries, 
we obtain $(2)\Rightarrow (4)$, 
$(4)\Rightarrow (5)$ and $(4)\Rightarrow (6)$, also, 
$(3)\Rightarrow (7)$,
$(7)\Rightarrow (8)$ and
$(7)\Rightarrow (9)$.

Concerning scrambled sets of boundaries, 
\cite[Theorem~5.5]{Ho2} implies 
$(6)\Rightarrow (10)$ 
and $(9)\Rightarrow (11)$.

Also concerning minimality of boundaries, 
\cite[Theorem~6.4]{Ho1} implies 
$(5)\Rightarrow (12)$ 
and $(8)\Rightarrow (12)$.

By splitting theorems (cf.\ \cite{Ho3}, \cite{Mo}), 
we obtain $(10)\Rightarrow (12)$ and $(11)\Rightarrow (12)$
(cf.\ \cite[Proposition~6.3]{Ho1}).

Therefore 
the statements $(1)$--$(12)$ are equivalent.
\end{proof}

\section{On limit-point sets of boundaries of Coxeter groups}

We prove Theorem~\ref{Thm2}.

\begin{proof}[Proof of Theorem~\ref{Thm2}]
Suppose that a finitely generated infinite Coxeter group $W$ 
acts geometrically on a proper CAT(0) space $X$.

Here there exists $S\subset W$ such that $(W,S)$ is a Coxeter system.
Now we consider the irreducible decomposition of $(W,S)$ as 
$$ W=W_{S_1}\times \dots \times W_{S_k}\times W_{S_{k+1}} \times \dots \times W_{S_n} $$
where each $(W_i,S_i)$ is irreducible and 
we may suppose that $W_{S_i}$ is infinite for any $i=1,\dots,k$ 
and $W_{S_i}$ is finite for any $i=k+1,\dots,n$.
Let $W'=W_{S_1}\times \dots \times W_{S_k}$.
Then $W'$ is a finite-index subgroup of $W$ and 
acts geometrically on the CAT(0) space $X$ 
(where $W'$ is the minimum finite-index parabolic subgroup of $(W,S)$).

Here we note that every Coxeter group has finite center.
Hence by the splitting theorem 
\cite[Theorem~2]{Ho3} and \cite[Corollary~10]{Mo}, 
$X$ contains a closed convex $W'$-invariant quasi-dense subspace $X'$ 
which splits as a product $X'=X_1 \times \dots \times X_k$ 
where the action of $W'=W_{S_1}\times \dots \times W_{S_k}$ 
on $X'=X_1 \times \dots \times X_k$ splits and 
$W_{S_i}$ acts geometrically on $X_i$ for each $i=1,\dots,k$.

Then every irreducible infinite Coxeter group $W_{S_i}$ 
is either affine or contains a rank-one isometry 
by \cite[Proposition~6.5]{CF} and the observation in Section~1.

If $W_{S_i}$ is affine, then 
$W_{S_i}$ contains a finite-index subgroup which isomorphic to $\Z^{n_i}$ 
and $X_i$ contains a quasi-dense subspace which isometric to $\R^{n_i}$.
Hence the limit-point set 
$\{w_i^\infty\,|\,w_i\in W_i,\ o(w_i)=\infty\}$ is dense in 
the boundary $\partial X_i$.

Also if $W_{S_i}$ contains a rank-one isometry, then 
the action of $W_{S_i}$ on the boundary $\partial X_i$ is minimal.
Hence \cite[Proposition~6.2]{Ho1} implies that 
the limit-point set $\{w_i^\infty\,|\,w_i\in W_i,\ o(w_i)=\infty\}$ is dense in 
the boundary $\partial X_i$.

Therefore, 
by a similar argument to the proof of \cite[Proposition~6.5]{Ho1}, 
we obtain that 
the limit-point set $\{w^\infty\,|\,w\in W,\ o(w)=\infty\}$ 
is dense in the boundary $\partial X$.
\end{proof}

\section{Observations on boundaries of CAT(0) groups with rank-one isometries}

We introduce some observations on 
boundaries of CAT(0) groups with rank-one isometries.

Now we suppose that 
a group $G$ acts geometrically on a proper CAT(0) space $X$ and 
suppose that $G$ contains a rank-one isometry 
(hence the boundary $\partial X$ has a topological fractal structure).

Let $V$ be a non-empty open subset of $\partial X$ 
whose closure ${\rm cl}\, V$ is a proper subset of $\partial X$.
Then there exists a rank-one isometry $g\in G$ as $g^\infty \in V$, 
because the limit-point set of rank-one isometries in $G$ is dense in $\partial X$.
Indeed $\partial X$ is minimal and 
$$G g^\infty=\{ag^\infty\,|\,a\in G\}=\{(aga^{-1})^\infty\,|\,a\in G\}$$ 
is dense in the boundary $\partial X$.

Every rank-one isometry acts 
with {\it north-south dynamics} on the boundary $\partial X$ (cf.\ \cite[p.7]{Ha}).
Hence, since $g$ is a rank-one isometry of $X$ and $g^\infty \in V$, 
the set $\{g^i V\,|\,i\in\N\}$ is a neighborhood basis for $g^\infty$ in $\partial X$.
Here all $g^i V$ are homeomorphic to $V$.

Thus if there exists a non-empty open subset $V$ of $\partial X$ 
whose closure ${\rm cl}\, V$ is a proper subset of $\partial X$ 
such that $V$ has some {\it topological property $(P)$}, 
then $\partial X$ has the locally {\it topological property $(P)$} 
at the limit-point $g^\infty$.

Also for any rank-one isometry $h \in G$, 
we can consider the limit-point $h^\infty \in \partial X$.
Then $G h^\infty$ is dense in $\partial X$, 
since $\partial X$ is minimal.
Hence $ah^\infty\in V$ for some $a\in G$.
Then $h^\infty\in a^{-1}V$ and $a^{-1}V$ is homeomorphic to $V$.
Thus 
the boundary $\partial X$ has the locally {\it topological property $(P)$} 
at the limit-point $h^\infty$ of all rank-one isometries $h \in G$.

We also note that the limit-point set of all rank-one isometries 
is dense in the boundary $\partial X$.

As one example, 
if there exists a non-empty {\it connected} open subset $V$ of $\partial X$ 
whose closure ${\rm cl}\, V$ is a proper subset of $\partial X$, 
then $\partial X$ is locally {\it connected} at 
the limit-points $g^\infty$ of all rank-one isometries $g\in G$.

Moreover if $\partial X$ is non-locally connected at some point $\alpha \in \partial X$, 
then 
$\partial X$ is non-locally connected at $g\alpha$ for all $g\in G$.
Here $G\alpha$ is also dense in $\partial X$.

It seems that 
these arguments relate to research on local connectivity of boundaries of CAT(0) groups 
by M.~Mihalik, K.~Ruane and S.~Tschantz (\cite{MR}, \cite{MRT}) 
and research on cut-points and limit-points of boundaries of CAT(0) groups 
by P.~Papasoglu and E.~L.~Swenson (\cite{PS}, \cite{Sw}).

Also as one application, 
we obtain the following theoreom by a similar argument to the proof of \cite[Theorem~4.4]{KB}.

\begin{Theorem}
If a CAT(0) group $G$ with a rank-one isometry 
acts geometrically on a proper CAT(0) space $X$, 
then the following statements are equivalent:
\begin{enumerate}
\item[{$\rm (i)$}] the boundary $\partial X$ is an $n$-manifold,
\item[{$\rm (ii)$}] the boundary $\partial X$ of $X$ contains 
some closed neighborhood $U$ which is homeomorphic to an $n$-ball,
\item[{$\rm (iii)$}] the boundary $\partial X$ is homeomorphic to an $n$-sphere.
\end{enumerate}
\end{Theorem}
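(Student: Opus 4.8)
The plan is to prove the cycle $(iii)\Rightarrow(i)\Rightarrow(ii)\Rightarrow(iii)$, where only the last implication carries genuine content. The implication $(iii)\Rightarrow(i)$ is immediate, since the $n$-sphere is an $n$-manifold, and $(i)\Rightarrow(ii)$ is equally routine: if $\partial X$ is an $n$-manifold, then any point has a Euclidean chart, and a small closed ball inside such a chart is a closed neighborhood homeomorphic to the $n$-ball. (A boundary of a proper CAT(0) space is compact and metrizable, so these balls exist and $\partial X$ is an honest closed manifold when $(i)$ holds.)

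For $(ii)\Rightarrow(iii)$ I would first localize the ball at a limit point. From the closed neighborhood $U$, homeomorphic to an $n$-ball $D^n$, I extract a non-empty open subset $V_0\subseteq\partial X$ that is homeomorphic to $\R^n$: the relative interior of $U$ in $\partial X$ is non-empty and meets the interior of the ball, so a small coordinate ball around such a point is open in $\partial X$ and is an open $n$-cell. Because $G$ contains a rank-one isometry, the limit points of rank-one isometries are dense in $\partial X$ (as recorded in Section~4), so I may choose a rank-one isometry $g\in G$ whose attracting fixed point $g^{+}=g^{\infty}$ lies in $V_0$.

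Next I would exploit the north-south dynamics of $g$ on $\partial X$ to build a monotone exhaustion of $\partial X\setminus\{g^{-}\}$ by open $n$-cells, where $g^{-}=(g^{-1})^{\infty}$ is the repelling fixed point. Using that $\{g^{i}V_0\}$ is a neighborhood basis at $g^{+}$, I pass to $V=g^{m}V_0$ for $m$ large enough that $\mathrm{cl}(gV)\subseteq V$ and $g^{-}\notin\mathrm{cl}(V)$; then $V\approx\R^n$ is a forward-trapping open cell, and setting $W_i=g^{-i}V$ produces open $n$-cells with $\mathrm{cl}(W_i)\subseteq W_{i+1}$. North-south dynamics forces $g^{i}\beta\to g^{+}$ for every $\beta\neq g^{-}$, hence $\bigcup_i W_i=\partial X\setminus\{g^{-}\}$. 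M.~Brown's monotone union theorem (a monotone union of open $n$-cells is an open $n$-cell) then yields $\partial X\setminus\{g^{-}\}\approx\R^n$. Finally, since $\partial X$ is compact Hausdorff and $\partial X\setminus\{g^{-}\}$ is locally compact, $\partial X$ is canonically the one-point compactification of $\partial X\setminus\{g^{-}\}\approx\R^n$, that is, $\partial X\approx\S^n$.

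The main obstacle is the construction in the third step: producing a single forward-invariant open $n$-cell $V$ about $g^{+}$, i.e. arranging $V\approx\R^n$ and $\mathrm{cl}(gV)\subseteq V$ simultaneously while keeping $g^{-}$ outside $\mathrm{cl}(V)$. This is where north-south dynamics must be used carefully, and here it helps to note that minimality of $\partial X$ already forces it to be an $n$-manifold at \emph{every} point (the set of $n$-manifold points is open and $G$-invariant, and its complement is a closed $G$-invariant set, hence empty by minimality), so that the shrinking neighborhoods $g^{i}V_0$ are genuinely Euclidean. Once the trapping cell is in hand, Brown's theorem and the one-point compactification identification are standard; I would also verify the borderline case $n=0$, where $\R^0$ is a point and $\S^0$ is two points, to confirm the statement there as well.
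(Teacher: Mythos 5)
Your reduction to $(ii)\Rightarrow(iii)$ and the overall strategy (density of rank-one limit points, north--south dynamics, Brown's monotone union theorem, one-point compactification) are sound, but the construction of the trapping cell --- the step you yourself flag as the crux --- fails as written. You cannot achieve $\mathrm{cl}(gV)\subseteq V$ and $g^-\notin\mathrm{cl}(V)$ by taking $V=g^mV_0$ with $m$ large: since $g$ commutes with its own powers and fixes $g^-$, both conditions are invariant under replacing $V_0$ by $g^mV_0$. Indeed $\mathrm{cl}(g\cdot g^mV_0)\subseteq g^mV_0$ holds if and only if $\mathrm{cl}(gV_0)\subseteq V_0$ (apply the homeomorphism $g^{-m}$), and $g^-\in \mathrm{cl}(g^mV_0)=g^m\,\mathrm{cl}(V_0)$ if and only if $g^-\in\mathrm{cl}(V_0)$. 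So either $V_0$ already traps, or no iterate $g^mV_0$ ever does; and $V_0$ was chosen with no reference to the dynamics of $g$, so there is no reason it should. The step is load-bearing: without $\mathrm{cl}(gV)\subseteq V$ the sets $W_i=g^{-i}V$ need not even be nested, so Brown's theorem does not apply. The repair is standard and stays inside your framework: inside $V_0\approx\R^n$ pick a closed Euclidean coordinate ball $B$ with $g^+\in\Int B$ and $g^-\notin B$ (possible since $g^+\neq g^-$); north--south dynamics applied to the neighborhoods $\Int B$ of $g^+$ and $\partial X-B$ of $g^-$ yields $N$ with $g^N(B)\subseteq\Int B$; now replace $g$ by $h=g^N$ (a rank-one isometry with the same fixed points and the same dynamics) and set $V=\Int B$. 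Then $\mathrm{cl}(hV)\subseteq h(B)\subseteq V$, the open $n$-cells $W_i=h^{-i}V$ are nested and exhaust $\partial X-\{g^-\}$, and your appeal to Brown's monotone union theorem and to uniqueness of the one-point compactification finishes the proof. (The same invariance phenomenon is why the claim that $\{g^iV_0\}$ is a neighborhood basis at $g^+$ needs $g^-\notin V_0$ in the first place; the coordinate-ball choice guarantees this.)

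With that repair your argument is correct, and it is a genuinely different route from the paper's written proof. The paper stays with the topological fractal structure: it picks two disjoint closed $n$-balls $U'\subseteq \Int U$ and $V'\subseteq\Int(g^{-1}U)$, uses the Ballmann--Brin property to find $g'$ with $g'(\partial X-\Int U')\subseteq V'$, so that $\partial X=g'U'\cup V'$ is a union of two closed $n$-balls, and then concludes with a bicollar/generalized Schoenflies argument that is only sketched. Your exhaustion argument is essentially the Kapovich--Benakli-style proof that the paper cites as its model; once the trapping cell is fixed it is arguably tighter than the paper's version, since Brown's monotone union theorem is invoked as a clean black box rather than ``some argument on bicollars of $n$-disks,'' while the paper's version, in exchange, needs no dynamics beyond the fractal property itself.
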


\begin{proof}
We first note that the implications $\rm (iii)\Rightarrow (i) \Rightarrow (ii)$ are obvious.

Hence now we show the implication $\rm (ii)\Rightarrow (iii)$.

Suppose that $\rm (iii)$ holds; that is, the boundary $\partial X$ of $X$ contains 
some closed neighborhood $U$ which is homeomorphic to an $n$-ball.
For a point $\alpha \in \partial X -U$, 
there exists $g\in G$ such that $g\alpha \in \Int U$, 
since the action of $G$ on $\partial X$ is minimal.
Then $V:=g^{-1}U$ is a neighborhood of $\alpha$ 
which is homeomorphic to an $n$-ball.
Let $U'$ and $V'$ be a proper subsets of $\Int U$ and $\Int V$ respectively 
such that $U'$ and $V'$ are homeomorphic to an $n$-ball and $U' \cap V'=\emptyset$.
Let $F=\partial X - \Int U'$.
Then there exists $g'\in G$ such that $g'F \subset V'$, 
because the boundary $\partial X$ has a topological fractal structure.
Then $g'U'\cup V'=\partial X$ and $g'U'$ and $V'$ are homeomorphic to an $n$-ball.
(Moreover, $g'U\cup V=\partial X$ and $g'U$ and $V$ are homeomorphic to an $n$-ball.)
Using some argument on bicollars of $n$-disks, 
we obtain that $\partial X$ is homeomorphic to an $n$-sphere.
\end{proof}

\begin{Acknowledgement}
The author would like to thank Dr. Naotsugu Chinen for helpful discussion and helpful advice.
\end{Acknowledgement}

%

%
\end{document}